\newtheorem{thm}{Theorem}[section]
\newtheorem{lem}[thm]{Lemma}
\theoremstyle{definition}
\newtheorem{defn}[thm]{Definition}
\theoremstyle{remark}
\newtheorem{rem}[thm]{Remark}
\numberwithin{equation}{section}
\begin{document}

\title[Semiconservative random walks in weak sense]{Semiconservative random walks in weak sense}%
\author{Vyacheslav M. Abramov}%
\address{Monash University, School of Mathematical Sciences, Wellington Road, Clayton Campus, Clayton, Victoria-3800, Australia}%

\email{vabramov126@gmail.com}%

\subjclass{60G50; 60K25; 60E15; 60C05}%
\keywords{multidimensional random walks; conservative and semiconservative random walks; convex order relation;  Markovian queueing systems; multiclass queues; negative arrivals; reflections}
\begin{abstract}
Conservative and semiconservative random walks in $\mathbb{Z}^d$ were introduced and studied in [V.M. Abramov, J. Theor. Probab. (2017). https://doi.org/10.1007/s10959-017-0747-3]. In the present paper, we extend these concepts for homogeneous random walks in $\mathbb{R}^d$ introducing semiconservative random walks in weak sense and
construct such a family of random walks in $\mathbb{R}^d$.
\end{abstract}
\maketitle

\section{Introduction, definitions, formulation of the problem and the main result}
\subsection{History and motivation of the study}
Let $\mathcal{A}$ be a set of vector-valued parameters, and $d$ is an integer value. Then the triple $\{\mathbf{S}_t,\mathcal{A}, d\}$ is said to specify a family of random walks, where $d$ is the dimension of random walks of the family. A random walk of the family is denoted $\mathbf{S}_t(a)$, where $a\in\mathcal{A}$, and
$$
\mathbf{S}_t(a)=\left(S_t^{(1)}(a), S_t^{(2)}(a),\ldots, S_t^{(d)}(a)\right),
$$
where $S_t^{(i)}(a)$ the $i$th component of the vector $\mathbf{S}_t(a)$. Denote by $$\|\mathbf{S}_t(a)\|=\sum_{i=1}^{d}\big|S_t^{(i)}(a)\big|$$ the $l_1$-norm of $\mathbf{S}_t(a)$.

Some of the families of random walks in $\mathbb{Z}^d$ have been considered in \cite{A}. One of them, the family of symmetric random walks, is defined by the recursion
\begin{eqnarray}
  \mathbf{S}_0(a) &=& \mathbf{0}, \label{eq.0.1+}\\
  \mathbf{S}_t(a) &=& \mathbf{S}_{t-1}(a)+\mathbf{e}_t(a), \quad t=1,2,\ldots,\label{eq.0.2+}
\end{eqnarray}
where $\mathbf{0}$ is $d$-dimensional vector of zeros, and the vector $\mathbf{e}_t(a)$ is one of the $2d$ randomly chosen vectors $\{\pm\mathbf{1}_i, i=1,2,\ldots,d\}$ independently of the history as each other as follows. The probability that the vector $\mathbf{1}_i$ will be chosen, which is the same for the vector $(-\mathbf{1}_i)$, is equal to $\alpha_i>0$, and $2\sum_{i=1}^d\alpha_i=1$. Then, $a=(\alpha_1,\alpha_2,\ldots,\alpha_d)\in\mathcal{A}$ is the $d$-dimensional parameter that characterizes the family of random walks. It was shown in \cite{A} that the family of symmetric random walks is $(\mathcal{A},d)$-conservative, which, according to Definition 1.1 in \cite{A}, means as follows.

Let $t_{n,1}(a)$, $t_{n,2}(a)$,\ldots denote the sequence of times for the consecutive events $\{\|\mathbf{S}_{t_{n,j}(a)}(a)\|=n\}$, $j=1,2,\ldots$.  Then, for any $a_1\in\mathcal{A}$ and $a_2\in\mathcal{A}$ and any $n\geq0$ the corresponding sequences of times are defined by similar way, and
\begin{equation}\label{eq.0.3+}
\begin{aligned}
&\lim_{j\to\infty}\mathsf{P}\{\|\mathbf{S}_{t_{n,j}(a_1)+1}(a_1)\|=n+1~|~\|\mathbf{S}_{t_{n,j}(a_1)}(a_1)\|=n\}\\
&=\lim_{j\to\infty}\mathsf{P}\{\|\mathbf{S}_{t_{n,j}(a_2)+1}(a_2)\|=n+1~|~\|\mathbf{S}_{t_{n,j}(a_2)}(a_2)\|=n\}.
\end{aligned}
\end{equation}
Note that the time instant $t_{n,j}(a)$, $a\in\mathcal{A}$, for any given $j$ exists with some positive probability $p_{n,j}(a)$, and the following two cases are possible: either $p_{n,j}(a)\equiv1$ for all $j$ when the random walk is recurrent, i.e. $d\leq2$, or $p_{n,j}(a)$ vanishes as $j$ increases to infinity when the random walk is transient, i.e. $d>2$. Thus, the sequences of times $t_{n,1}(a_1)$, $t_{n,2}(a_1)$,\ldots and $t_{n,1}(a_2)$, $t_{n,2}(a_2)$,\ldots, for which the limits in \eqref{eq.0.3+} are defined, are assumed to be given either with probability 1 or with probability 0. So, the definition implies that the original probability space is generated by the variety of events that include all those having probability zero in limits. Then, \eqref{eq.0.3+} is properly defined.

Along with symmetric random walks defined by \eqref{eq.0.1+} and \eqref{eq.0.2+}, there are some other families of random walks considered in \cite{A}. Being an extension of the family of symmetric random walks, those families of random walks are characterized as semiconservative random walks. Following \cite{A}, a family of random walks $\{\mathbf{S}_t,\mathcal{A}, d\}$ is called $(\mathcal{A},d)$-semiconservative, if there exists $a^*\in\mathcal{A}$ such that for any $a\in\mathcal{A}$ either
\begin{equation}\label{eq.0.4+}
\begin{aligned}
&\lim_{j\to\infty}\mathsf{P}\{\|\mathbf{S}_{t_{n,j}(a)+1}(a)\|=n+1~|~\|\mathbf{S}_{t_{n,j}(a)}(a)\|=n\}\\
&\leq\lim_{j\to\infty}\mathsf{P}\{\|\mathbf{S}_{t_{n,j}(a^*)+1}(a^*)\|=n+1~|~\|\mathbf{S}_{t_{n,j}(a^*)}(a^*)\|=n\},
\end{aligned}
\end{equation}
or
\begin{equation}\label{eq.0.5+}
\begin{aligned}
&\lim_{j\to\infty}\mathsf{P}\{\|\mathbf{S}_{t_{n,j}(a)+1}(a)\|=n+1~|~\|\mathbf{S}_{t_{n,j}(a)}(a)\|=n\}\\
&\geq\lim_{j\to\infty}\mathsf{P}\{\|\mathbf{S}_{t_{n,j}(a^*)+1}(a^*)\|=n+1~|~\|\mathbf{S}_{t_{n,j}(a^*)}(a^*)\|=n\}.
\end{aligned}
\end{equation}
The sequences of times $t_{n,j}(a)$ and $t_{n,j}(a^*)$ ($j=1,2,\ldots$) used in \eqref{eq.0.4+} and \eqref{eq.0.5+} are defined similarly to those in \eqref{eq.0.3+}.

The concept of conservative random walks provides a clear intuition for recurrence and transience of the families of random walks. For instance, symmetric random walks have the same classification as simple (P\'olya) random walks, since all of them belong to the same class of conservative random walks. The role of semiconservative random walks is important, since this class of random walks is wide enough to provide the classification for different families of state-dependent random walks (see \cite{A}).

Unfortunately, the classes of conservative and semiconservative random walks are relatively narrow and cannot be specified for many families of random walks. Therefore, in \cite{AA} another classification of random walks has been suggested. In that classification, the families of random walks are described by the couple $(\mathbf{S}_t,\mathcal{A})$ (without indication of the dimension $d$) and for all $a\in\mathcal{A}$ are characterized by the limit relation
\begin{equation}\label{eq.0.6+}
\lim_{n\to\infty}\left(\frac{\mathsf{P}\{\|\mathbf{S}_{t+1}(a)\|=n+1~|~\|\mathbf{S}_{t}(a)\|=n\}}{\mathsf{P}\{\|\mathbf{S}_{t+1}(a)\|=n-1~|~\|\mathbf{S}_{t}(a)\|=n\}}\right)^n
=\mathrm{e}^\psi,
\end{equation}
which is assumed to exist.
The parameter $\psi$ on the right-hand side of \eqref{eq.0.6+} is called \textit{index} of the family of random walks. For instance, the index of the family of conservative random walks in $\mathbb{Z}^d$ is equal to $d-1$, i.e. $\psi+1=d$, and this relation between $\psi$ and $d$ is satisfied for many families of random walks (see Lemma 4.2 and its application in Section 4 in \cite{A}). So, for $\psi\leq1$ the family of random walks is recurrent and for $\psi>1$ transient. In some cases, however, the index $\psi$ does not satisfy the aforementioned relation and serves as a fractional characteristic of the random walk. For classes of $\psi$-random walks see \cite{AA}. Here we provide a simple example of the known family of one-dimensional random walks \cite{Kl}. Consider a family of one-dimensional random walks $(S_t,\mathcal{A})$, where an element $a\in\mathcal{A}$ is an infinite-dimensional vector denoted by ($\alpha_1$, $\alpha_2$,\ldots), and satisfying the property $\lim_{j\to\infty}\alpha_j=\alpha^*$. Let $S_0(a)=0$, $S_1(a)=\pm1$ each with probability half, $S_t(a)=S_{t-1}(a)+e_t(a)$, $t\geq2$, where $e_t(a)$ takes the values $\pm1$, and the distribution of $|S_t(a)|$ is defined by the following conditions. If $|S_t(a)|>0$, then
\begin{eqnarray*}
\mathsf{P}\{|S_{t+1}(a)|=|S_{t}(a)|+1\}&=&\frac{1}{2}+\frac{\alpha_{|S_t(a)|}}{|S_t(a)|},\\
\mathsf{P}\{|S_{t+1}(a)|=|S_{t}(a)|-1\}&=&\frac{1}{2}-\frac{\alpha_{|S_t(a)|}}{|S_t(a)|}.
\end{eqnarray*}
Otherwise, if $S_t(a)=0$, then $\mathsf{P}\{|S_{t+1}(a)|=1\}=1$. The values $\alpha_n$ are assumed to satisfy the condition $|\alpha_n|<\min\{C, (1/2)n\}$ for some $C>0$.
Apparently, in this case $\psi=4\alpha^*$, and the family of random walks is recurrent if $\alpha^*\leq1/4$. Otherwise, it is transient. In \cite{Kl}, the result for this example in slightly different formulation was established in the framework of the theory of stochastic difference equations.

The characterization of random walks with index $\psi$ seems can be helpful for many existing random walks problems (see Chapter 2 of \cite{L-P}). However, this type of characterization is hard to extend for real-valued random walks. In particular, it is hard to do even for one-dimensional random walks. Therefore, doing step down, we develop the conservative and semiconservative random walks concept and provide new knowledge for some families of real-valued random walks.

\subsection{Formulation of the problem, definitions} We consider the family of random walks $\{\mathbf{S}_t, \mathcal{A},d\}$, where a $d$-dimensional random walk of the family is denoted $\mathbf{S}_t(a)$ for $a\in\mathcal{A}$ and defined by the recursion
\begin{eqnarray}
  \mathbf{S}_0(a) &=& \mathbf{0}, \label{eq.1.1}\\
  \mathbf{S}_t(a) &=& \mathbf{S}_{t-1}(a)+\mathbf{x}_t(a), \quad t=1,2,\ldots,\label{eq.1.2}
\end{eqnarray}
where the vectors $\mathbf{S}_t(a)$ and $\mathbf{x}_t(a)$ are represented as
$$
\mathbf{S}_t(a)=\left(S_t^{(1)}(a), S_t^{(2)}(a),\ldots, S_t^{(d)}(a)\right)
$$
and, respectively,
\begin{equation*}\label{eq.1.3}
\mathbf{x}_t(a)=\left(x_t^{(1)}(a), x_t^{(2)}(a),\ldots, x_t^{(d)}(a)\right).
\end{equation*}
The sequence of the vectors $\mathbf{x}_1(a)$, $\mathbf{x}_2(a)$, \ldots is assumed to be mutually independent for any $a\in\mathcal{A}$, and the coordinates
$$x_t^{(1)}(a), x_t^{(2)}(a),\ldots, x_t^{(d)}(a)$$
of the vector $\mathbf{x}_t(a)$ all are real random variables taking at least two distinct values and assumed to be independent for any $t$ and $a$.
The definition below extends the concepts of conservative and semiconservative random walks for families of random walks in $\mathbb{R}^d$ given by \eqref{eq.1.1} and \eqref{eq.1.2}.

\begin{defn}\label{D1} A family of random walks $\{\mathbf{S}_t,\mathcal{A}, d\}$ is called $(\mathcal{A}, d)$-conservative, if for any $a_1\in\mathcal{A}$ and $a_2\in\mathcal{A}$ and all $z>0$
\begin{equation}\label{eq.0.1}
\begin{aligned}
&\lim_{t\to\infty}\mathsf{P}\{\|\mathbf{S}_t(a_1)\|>z~|~\|\mathbf{S}_{t-1}(a_1)\|=z\}\\ &=\lim_{t\to\infty}\mathsf{P}\{\|\mathbf{S}_t(a_2)\|>z~|~\|\mathbf{S}_{t-1}(a_2)\|=z\},
\end{aligned}
\end{equation}
and $(\mathcal{A}, d)$-semiconservative, if there exists $a^*\in\mathcal{A}$ such that for all $a\in\mathcal{A}$ and $z>0$ either
\begin{equation}\label{eq.0.2}
\begin{aligned}
&\lim_{t\to\infty}\mathsf{P}\{\|\mathbf{S}_t(a^*)\|>z~|~\|\mathbf{S}_{t-1}(a^*)\|=z\}\\ &\leq\lim_{t\to\infty}\mathsf{P}\{\|\mathbf{S}_t(a)\|>z~|~\|\mathbf{S}_{t-1}(a)\|=z\},
\end{aligned}
\end{equation}
or
\begin{equation}\label{eq.0.3}
\begin{aligned}
&\lim_{t\to\infty}\mathsf{P}\{\|\mathbf{S}_t(a^*)\|>z~|~\|\mathbf{S}_{t-1}(a^*)\|=z\}\\ &\geq\lim_{t\to\infty}\mathsf{P}\{\|\mathbf{S}_t(a)\|>z~|~\|\mathbf{S}_{t-1}(a)\|=z\}.
\end{aligned}
\end{equation}
Here in \eqref{eq.0.1}, \eqref{eq.0.2} and \eqref{eq.0.3}, $\|\mathbf{S}_t(a)\|=\sum_{i=1}^{d}\big|S_t^{(i)}(a)\big|$ denotes the $l_1$-norm of the vector $\mathbf{S}_t(a)$.
\end{defn}

Note that unlike the case of integer-valued random walks, all conditional probabilities that appear here in \eqref{eq.0.1}, \eqref{eq.0.2} and \eqref{eq.0.3} and later in \eqref{eq.0.4} and \eqref{eq.0.5} are given with respect to the events having probability zero.

\smallskip

In the definition below, we introduce semiconservative random walks in weak sense.

\begin{defn}\label{D2}
A family of random walks $\{\mathbf{S}_t,\mathcal{A}, d\}$ is called $(\mathcal{A}, d)$-semiconservative \textit{in weak sense} if there exists $a^*\in\mathcal{A}$ such that for all $a\in\mathcal{A}$ either
\begin{equation}\label{eq.0.4}
\begin{aligned}
&\lim_{t\to\infty}\mathsf{P}\{\|\mathbf{S}_t(a^*)\|>z~|~\|\mathbf{S}_{t-1}(a^*)\|=z\}\\
&\leq\lim_{t\to\infty}\mathsf{P}\{\|\mathbf{S}_t(a)\|>z~|~\|\mathbf{S}_{t-1}(a)\|=z\}
\end{aligned}
\end{equation}
or
\begin{equation}\label{eq.0.5}
\begin{aligned}
&\lim_{t\to\infty}\mathsf{P}\{\|\mathbf{S}_t(a^*)\|>z~|~\|\mathbf{S}_{t-1}(a^*)\|=z\}\\
&\geq\lim_{t\to\infty}\mathsf{P}\{\|\mathbf{S}_t(a)\|>z~|~\|\mathbf{S}_{t-1}(a)\|=z\}
\end{aligned}
\end{equation}
is satisfied for all $z\geq z^*(a)$, where $z^*(a)$ is some nonnegative value depending on $a$.
\end{defn}

In the present paper, we construct a family of semiconservative random walks in weak sense in $\mathbb{R}^d$. We first construct such a family in $\mathbb{Z}^d$. For this purpose, we reduce the problem to the specified Markovian queueing system with multiple classes, positive and negative arrivals and reflections, and derive the system of equations for the system states. Application of queueing theory is provided in the following way. We assume that a random walk spends in any of its states exponentially distributed time, thus reducing it to continuous time Markov process. The advantage of application of this approach is that one can quite easily derive the system of Chapman-Kolmogorov differential equations, the theory and results of which are well-established. As well, the object of the study is the conditional distribution of the norm of the random walk. Since the norm of an original and reflected random walk have the same distribution, we choose to study the reflected random walk, the increments of which can be naturally interpreted in terms of queueing theory. Then, the results obtained for families of integer-valued random walks are first extended for series of rational-valued random walks, and then, in the limiting case, are established for families of real-valued random walks. In our proof, we use properties of random variables having same means and different variances to achieve the required inequalities of Definition \ref{D2}.

\subsection{Notation and formulation of the main result}
Let $$F^{(i)}(x; a)=\mathsf{P}\big\{x_t^{(i)}(a)\leq x\big\}, \quad i=1,2,\ldots,d, \quad a\in\mathcal{A}.$$

\smallskip
The main result of the paper is the following theorem.

\begin{thm}\label{T1}
Let $\mathcal{A}$ be a set of all vectors $a$ $=\big(a^{(1)}$, $a^{(2)}$,\ldots, $a^{(d)}\big)$ with positive components satisfying the condition
$$
\sum_{i=1}^{d}a^{(i)}=d.
$$
Then, the family of random walks $\{\mathbf{S}_t, \mathcal{A},d\}$ is $(\mathcal{A},d)$-semiconservative in weak sense,
if the following conditions are satisfied:
$$
F^{(i)}(x; a)=F\left(\frac{x}{a^{(i)}}\right), \quad i=1,2,\ldots,d,\leqno(i)
$$

\noindent
(ii) \quad $F(x)$
is a probability distribution function of zero mean and finite variance  random variable.
\end{thm}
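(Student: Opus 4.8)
The plan is to take $a^{*}=(1,1,\ldots,1)$, the centre of the simplex $\{\sum_{i}a^{(i)}=d\}$, as the reference point and to compare every other $a\in\mathcal{A}$ against it. The first reduction uses that $\|\mathbf{S}_t(a)\|$ depends on the coordinates only through $|S_t^{(1)}(a)|,\ldots,|S_t^{(d)}(a)|$, so the law of the norm is unchanged if each coordinate is replaced by its absolute value. Following the programme announced in the introduction I would therefore pass to the reflected walk $R_t^{(i)}(a)=|S_t^{(i)}(a)|$, whose increments are read as the positive/negative arrivals and the reflection at the origin of a $d$-class system, with $\|\mathbf{S}_t(a)\|=\sum_i R_t^{(i)}(a)$ playing the role of the total occupancy. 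The quantity to be understood is
\[
q(a,z)=\lim_{t\to\infty}\mathsf{P}\{\|\mathbf{S}_t(a)\|>z\mid\|\mathbf{S}_{t-1}(a)\|=z\},
\]
the limiting probability that the occupancy increases given that it sits at level $z$.

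For the integer-valued case I would realise the reflected walk as a continuous-time Markov chain by inserting independent exponential holding times, and write the associated Chapman--Kolmogorov (balance) equations; this is the step that makes $q(a,z)$ well defined despite the conditioning event having probability zero, since letting $t\to\infty$ after conditioning on $\{\|\mathbf{S}_{t-1}(a)\|=z\}$ selects a definite stationary weighting of the admissible configurations on the $l_1$-sphere of radius $z$. Decomposing the norm increment as $\sum_i \operatorname{sgn}(S_{t-1}^{(i)})x_t^{(i)}(a)$ plus a nonnegative reflection correction coming from the coordinates near the origin, I would express $q(a,z)$ through the stationary weights and the per-coordinate reflection drift $g_i(s)=\mathsf{E}|s+x_t^{(i)}(a)|-|s|\ge 0$.

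The comparison is the heart of the argument. By condition (i) one has $x_t^{(i)}(a)\stackrel{d}{=}a^{(i)}Y$ with $Y\sim F$ of mean zero, and scaling a mean-zero variable is monotone in the convex order, so $c\mapsto cY$ increases in $\leq_{\mathrm{cx}}$; since $x\mapsto|s+x|$ is convex, $g_i(s)$ is nondecreasing in $a^{(i)}$ for every fixed $s$. The constraint $\sum_i a^{(i)}=d$ gives, by Cauchy--Schwarz, $\sum_i (a^{(i)})^2\ge d$ with equality exactly at $a^{*}$, so $a^{*}$ carries the least total spread. However, a general $a\ne a^{*}$ has some $a^{(i)}>1$ and some $a^{(i)}<1$, so its coordinatewise drifts move in opposite directions relative to those of $a^{*}$; the net effect on $q(a,z)$ therefore depends on how the stationary weighting distributes mass among the coordinates, and this is precisely what produces the \emph{either/or} alternative and the threshold $z^{*}(a)$ in Definition \ref{D2}: for $z$ beyond $z^{*}(a)$ the bulk (away-from-boundary) behaviour stabilises the sign of $q(a,z)-q(a^{*},z)$, even though near the boundary the reflection corrections may reverse it. I would finish this step by showing that the two curves $z\mapsto q(a^{*},z)$ and $z\mapsto q(a,z)$ cease to cross once $z\ge z^{*}(a)$, yielding one of \eqref{eq.0.4} or \eqref{eq.0.5}.

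Finally I would transfer the conclusion from $\mathbb{Z}^d$ to $\mathbb{R}^d$ as indicated: first rescale the lattice spacing to handle rational-valued increments, obtaining the inequalities for a sequence of lattice approximations, and then take a weak limit, using condition (ii) (zero mean and finite variance, hence uniform integrability of the increments) to guarantee that the reflection drifts and the limiting up-probabilities converge, so that the inequalities pass to the limit. The main obstacle I anticipate is the comparison step: one must control the $a$-dependent stationary law of the position on the $l_1$-sphere simultaneously with the drift, since the convex-order monotonicity holds coordinatewise but the weighting couples all coordinates; a secondary difficulty is justifying the interchange of the $t\to\infty$ limit defining $q(a,z)$ with the lattice-refinement limit used to reach $\mathbb{R}^d$.
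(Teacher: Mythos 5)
Your proposal reproduces the paper's architecture (reflection, the multiclass queueing model with exponential holding times, Chapman--Kolmogorov equations, lattice case first, then rational spans, then the continuum limit), but the heart of the argument --- the comparison step --- contains a genuine gap, and it is exactly the step you yourself flag as the main obstacle. You compare the drifts \emph{coordinatewise}, observe that coordinates with $a^{(i)}>1$ and $a^{(i)}<1$ pull in opposite directions, conclude that the sign of $q(a,z)-q(a^{*},z)$ is ambiguous, and then assert without proof that the two curves ``cease to cross'' beyond some $z^{*}(a)$. The paper never compares coordinates; it compares the \emph{aggregate} one-step increment of the norm. Under the natural coupling $B^{(i)}(a)=a^{(i)}B^{(i)}(\mathbf{1})$, $\widetilde B^{(i)}(a)=a^{(i)}\widetilde B^{(i)}(\mathbf{1})$, the increment for a regular $a$ is a weighted sum $\sum_{i}a^{(i)}\xi_i$ of the same terms whose unweighted sum $\sum_i \xi_i$ gives the increment for $\mathbf{1}$: the two have equal means (because $\sum_i a^{(i)}=d$), while the variance of the weighted sum is proportional to $\sum_i\big[a^{(i)}\big]^2\geq d$ (Cauchy--Schwarz), with equality only at $a=\mathbf{1}$. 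Lemma \ref{L1} of the paper (small-$s$ expansion of the transforms plus inversion) converts this variance domination into \emph{eventual tail domination}: there is $x^{*}$ with $\mathsf{P}\{X>x\}\leq\mathsf{P}\{Y>x\}$ for all $x\geq x^{*}$. Consequently the up-probability at high levels for \emph{every} $a\in\mathcal{A}$ dominates that of $\mathbf{1}$; the inequality always has the same direction, namely \eqref{eq.0.4} with $a^{*}=\mathbf{1}$, and the threshold $z^{*}(a)$ of Definition \ref{D2} is inherited from the lemma's $x^{*}$, not from any cessation of crossings. Your reading that the either/or alternative of the definition is generated by different $a$'s going different ways is a misconception produced by the coordinatewise viewpoint; without the aggregate lemma your proposal has no mechanism at all to determine the sign, and ``the bulk behaviour stabilises the sign'' is precisely the assertion requiring proof.

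The secondary difficulty you raise --- controlling the $a$-dependent stationary weighting on the $l_1$-sphere simultaneously with the drift --- is resolved in the paper by exact scaling identities rather than estimates: conditions (i)--(ii) force $q^{(i)}_{a^{(i)}n}(a)=q_{n\,\mathsf{gcd}(\cdot)}(\mathbf{1})$ up to index normalisation (Properties 1 and 2 of Section \ref{S3}), so the stationary weights of the regular walk coincide with those of the simple walk after rescaling of indices, the denominators $D(\mathbf{1})$ and $D(a)$ asymptotically coincide, and the elementary fact that termwise ordered ratios $b_j^{(1)}/c_j^{(1)}\leq b_j^{(2)}/c_j^{(2)}$ imply the same ordering for the ratios of the sums closes the argument. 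Thus the coupling between weighting and drift that you anticipated as a hard analytic problem disappears by construction; what cannot be dispensed with is the aggregate mean/variance comparison and the tail lemma above.
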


\begin{rem}\label{R1}
Note that in the case when $\mathbf{1}=(\underbrace{1,1,\ldots,1}_{d \ \text{units}})\in\mathcal{A}$, the random walk $\mathbf{S}_t(\mathbf{1})$ belongs to the family $\{\mathbf{S}_t, \mathcal{A},d\}$ of the random walks considered in Theorem \ref{T1}. The aforementioned vector $\mathbf{1}$ is associated with the vector $a^*$ given in Definition \ref{D2}, and in this case $F^{(i)}(x; \mathbf{1})=F(x)$, $ i=1,2,\ldots,d$.
We will call this case \textit{simple}. Any other case with $a\in\mathcal{A}$ will be called \textit{regular} case.
\end{rem}

\subsection{Outline of the paper} The rest of the paper is structured as follows. In Section \ref{S2}, we model a reflected random walk in $\mathbb{Z}^d$ as a specifically defined multiclass queueing system with positive and negative arrivals and reflections. In Section \ref{S3}, we derive basic equations for a queueing system. In Section \ref{S4} containing five subsections, we prove the main theorem of the paper.  In Section \ref{S4.1}, we describe the plan of the proof. In Sections \ref{S4.2} and \ref{S4.3} we represent the formulae for the required limiting conditional probabilities in simple and regular cases.
Section \ref{S4.4} contains the proof of the statement of the theorem in the case when the family of random walks is defined in $\mathbb{Z}^d$. In Section \ref{S4.5}, we extend the proof for the family of random walks defined in $\mathbb{R}^d$. In short Section \ref{S5}, we conclude the paper.

\section{The queueing model for a random walk}\label{S2}
In this section we model a random walk. Let $$\breve{\mathbf{S}}_t(a)=\left(\breve{S}_t^{(1)}(a), \breve{S}_t^{(2)}(a),\ldots,\breve{S}_t^{(d)}(a)\right)$$ denote the reflected random walk with respect to the original random walk $\mathbf{S}_t(a)$. With the initial value $\breve{\mathbf{S}}_0(a)=\mathbf{0}$, it is defined as follows. For $i=1,2,\ldots,d$,
$$
\breve{S}_t^{(i)}(a)=\left|\breve{S}_{t-1}^{(i)}(a)+x_t^{(i)}(a)\right|, \quad t\geq1.
$$
Since $\|\breve{\mathbf{S}}_t(a)\|=\|\mathbf{S}_t(a)\|$, the problem reduces to study the reflected random walk $\breve{\mathbf{S}}_t(a)$. Similarly to \cite{A}, assume that a random walk spends exponentially distributed time in any of its states prior moving to another state. Then, a reflected random walk can be modelled as the Markovian single-server queueing system presented below. In the description of the system a series parameter $a$ is omitted, since the queueing model describes a unique (common) random walk. However, in the places where it is clearly necessary, the parameter $a$ will be added.

Consider the following queueing system with $d$ classes. (We will not use the term \textit{customer class}, since the most important characteristic that is studied here is the \textit{workload} of the class $i$ arrivals, while the number of class $i$ customers is particular with respect to the workload, and it may be not defined in general.   Hence, we prefer to use the word \textit{arrival} rather than \textit{customer} and \textit{class} or \textit{arrival class} rather than \textit{customer class}.) Assume that all $d$ classes arrive simultaneously in the system according to Poisson input with rate 1.
An arrival class can be \textit{positive} or \textit{negative}. A more detailed explanation about it is provided below.

An arrival includes a batch of all $d$ classes, and the $j$th arrival of class $i$, if it is positive, is characterized by a random quantity $B_j^{(i)}$, and if it is negative, then its random quantity is denoted by $\widetilde{B}_j^{(i)}$. The random variables $B_j^{(i)}$ and $\widetilde{B}_j^{(i)}$ are real nonnegative random variables in general.
That is, the total quantity of the $j$th arrival, if all of them are positive, is $B_j=B_j^{(1)}+B_j^{(2)}+\ldots+B_j^{(d)}$. If, say, all $j$th arrival classes are positive except class 2, then $B_j=B_j^{(1)}+\widetilde{B}_j^{(2)}+\ldots+B_j^{(d)}$.
Each of the $2d$ sequences $\big\{B_1^{(i)}$, $B_2^{(i)},\ldots\big\}$, $\big\{\widetilde{B}_1^{(i)}$,  $\widetilde{B}_2^{(i)}\ldots\}$ ($i=1,2,\ldots,d$) consists of independent identically distributed random variables, and the sequences themselves are mutually independent.
(In the sequel, in the places where the indication of the $j$th arrival is not important, the index $j$ will be omitted.) It is assumed that $\mathsf{E}\widetilde B^{(i)}= \mathsf{E}B^{(i)}$, $i=1,2,\ldots,d$, and positive and negative arrivals of each class  in the arrival process  appear equally likely.

The service times are not considered in this system. Their role is given to negative arrivals, which normally reduce the workload of the system as explained below.
Let $W_j^{(i)}$ denote the total workload of class $i$ before the $j$th arrival. If the $j$th arrival of class $i$ is positive, then at the moment of arrival its total workload becomes equal to $W_j^{(i)}+B_j^{(i)}$. However, if the $j$th arrival of class $i$ is negative, then the workload mechanism is as follows. If $W_j^{(i)}\geq \widetilde B_j^{(i)}$, then, after the service completion, the remaining class $i$ workload is $W_j^{(i)}-\widetilde B_j^{(i)}$. This case is referred to as \textit{ordinary case}. However, if $W_j^{(i)}< \widetilde B_j^{(i)}$, then the following \textit{reflection mechanism} is assumed. The workload preceding the $j$th arrival of class $i$, $W_j^{(i)}$, is discharged, and the value of the workload after the $j$th arrival is changed to the value $\widetilde B_j^{(i)}-W_j^{(i)}$. This case is referred to as \textit{reflection case}.

For a clearer connection between the queueing system with reflections that is described above and a reflected random walk $\breve{\mathbf{S}}_t(a)$, equate
\begin{eqnarray*}
  \mathsf{P}\left\{B^{(i)}(a)\leq x\right\} &=& \mathsf{P}\left\{x_t^{(i)}(a)\leq x~|~x_t^{(i)}(a)\geq0\right\},\\
  \mathsf{P}\left\{\widetilde B^{(i)}(a)\leq x\right\} &=& \mathsf{P}\left\{-x_t^{(i)}(a)\leq x~|~x_t^{(i)}(a)<0\right\},\\
  i=1,2,\ldots, d && (a\in\mathcal{A}).
\end{eqnarray*}

The important particular case of the system is the case when the random variables $B^{(i)}$ and $\widetilde B^{(i)}$, $i=1,2,\ldots, d$, all are integer random variables. In this particular case we assume that  each of the random variables $B^{(i)}$, $i=1,2,\ldots,d$, can take zero value with positive probability, while $\widetilde B^{(i)}$, $i=1,2,\ldots,d$, are assumed to be positive random variables.
For integer-valued random variables $B^{(i)}$ and $\widetilde B^{(i)}$  we use the notation
\begin{eqnarray*}
  r_n^{(i)}(a)=\mathsf{P}\left\{B^{(i)}(a)=n\right\} &=& \mathsf{P}\left\{x_t^{(i)}(a)= n~|~x_t^{(i)}(a)\geq0\right\},\\
  \widetilde{r}_n^{(i)}(a)=\mathsf{P}\left\{\widetilde B^{(i)}(a)=n\right\} &=& \mathsf{P}\left\{x_t^{(i)}(a)=-n~|~x_t^{(i)}(a)<0\right\},\\
  i=1,2,\ldots, d && (a\in\mathcal{A}).
\end{eqnarray*}

\section{Basic equations of queueing process}\label{S3}
In this section, we present the basic equations for the queue-length process in the case where the random variables $B^{(i)}$ and $\widetilde B^{(i)}$, $i=1,2,\ldots, d$, are integer-valued. (As before, we use the parameter $a\in\mathcal{A}$ in the notation for $B^{(i)}$, $\widetilde B^{(i)}$ and other quantities in only places where it is logically required. Otherwise, for compactness of the formula presentations, it is omitted.)
In this case $B_j^{(i)}$ and $\widetilde B_j^{(i)}$ are called batch size of class $i$ customers in the $j$th positive arrival, and, respectively, batch size of class $i$ customers in the $j$th negative arrival.

Let $Q_i(t)$ denote the number of class $i$ customers at time $t$, and with $Q_i(0)=0$  let $$p^{(i)}_n(t)=\mathsf{P}\{Q_i(t)=n\}, \quad n=0,1,\ldots.$$

We have the following Chapman-Kolmogorov system of the equations
\begin{equation}\label{eq.3.1}
\begin{aligned}
&\frac{\mathrm{d}p^{(i)}_n(t)}{\mathrm{d}t}+p^{(i)}_n(t)\\
&=\sum_{l=0}^{n}p^{(i)}_l(t)r^{(i)}_{n-l}+\sum_{l=n+1}^{\infty}p^{(i)}_l(t)\widetilde r^{(i)}_{l-n}+\sum_{l=0}^{\infty}p^{(i)}_l(t)\widetilde r^{(i)}_{l+n}, \quad n\geq1,
\end{aligned}
\end{equation}
\begin{equation}\label{eq.3.2}
\begin{aligned}
&\frac{\mathrm{d}p^{(i)}_0(t)}{\mathrm{d}t}+p^{(i)}_0(t)=p^{(i)}_0(t)r^{(i)}_0+\sum_{l=1}^{\infty}p^{(i)}_l(t)\widetilde r^{(i)}_{l}.
\end{aligned}
\end{equation}

Since $\mathsf{E}\widetilde B^{(i)}=\mathsf{E} B^{(i)}$ for all $i=1,2,\ldots$, then the final probabilities $p^{(i)}_n(\infty)$, $n=0,1,\ldots$, do not exist. However, the normalized quantities
\begin{equation}\label{eq.3.5}
q^{(i)}_n=\lim_{t\to\infty}\frac{p^{(i)}_n(t)}{p^{(i)}_0(t)}, \quad n=0,1,\ldots \quad (i=1,2,\ldots,d)
\end{equation}
do.
In the sequel, we do not use explicit representation \eqref{eq.3.1}, \eqref{eq.3.2} or \eqref{eq.3.5} in direct way.

Our further aim is to study the quantities $q^{(i)}_n(a)$ under the special assumption that
$$F^{(i)}(x, a)=F\left(\frac{x}{a^{(i)}}\right), \quad i=1,2,\ldots, d, \quad a\in\mathcal{A}.$$
Recalling that all $B^{(i)}(a)$ and $\widetilde B^{(i)}(a)$ are assumed to be integer random variables ($i=1, 2,\ldots, d$, $a\in\mathcal{A}$), under the assumption of the theorem we have the following two important properties.

\textit{Property 1}. For $i=1,2,\ldots, d$ and $j=1,2,\ldots,d$ we have
\begin{eqnarray}
r^{(i)}_{a^{(i)}n}(a) &=& r^{(j)}_{a^{(j)}n}(a),\label{eq.3.11}\\
\widetilde{r}^{(i)}_{a^{(i)}n}(a) &=& \widetilde{r}^{(j)}_{a^{(j)}n}(a),\label{eq.3.14}
\end{eqnarray}
Here in \eqref{eq.3.11} and \eqref{eq.3.14} $a=\big(a^{(1)}$, $a^{(2)}$,\ldots, $a^{(d)}\big)\in\mathcal{A}$. The coordinates $a^{(i)}$, $i=1,2,\ldots,d$, of the vector $a$ are rational numbers in general, whereas $a^{(i)}n$, $i=1,2,\ldots,d$, are integer numbers. (A more detailed explanation is given below.)

Let $\mathcal{B}^{(i)}(a)$ denote the set of positive integer values of random variables $B^{(i)}(a)$ and $\widetilde B^{(i)}(a)$. For instance, if the random variable $B^{(i)}(a)$ takes the values $\{0, 2, 4, 6\}$, and the random variable $\widetilde B^{(i)}(a)$ takes the values $\{1, 2, 3, 6\}$, then $\mathcal{B}^{(i)}(a)=\{1, 2, 3, 4, 6\}$. Denote by $\mathsf{gcd}\big(\mathcal{B}^{(i)}(a)\big)$ the greatest common divisor of the values of the set $\mathcal{B}^{(i)}(a)$. Relations \eqref{eq.3.11} and \eqref{eq.3.14} imply
\begin{equation}\label{eq.3.12}
a^{(j)}\mathsf{gcd}\left(\mathcal{B}^{(i)}(a)\right)=a^{(i)}\mathsf{gcd}\left(\mathcal{B}^{(j)}(a)\right),
\end{equation}
where the left- and right-hand sides of \eqref{eq.3.12} are integers. Hence, the index $n$ in \eqref{eq.3.11} and \eqref{eq.3.14} takes the values
\begin{equation}\label{eq.3.13}
n=\frac{l}{a^{(i)}}\mathsf{gcd}\left(\mathcal{B}^{(i)}(a)\right)=\frac{l}{a^{(j)}}\mathsf{gcd}\left(\mathcal{B}^{(j)}(a)\right), \quad l=0,1,\ldots.
\end{equation}

So, from \eqref{eq.3.11},  \eqref{eq.3.14}  and from the basic equations given by \eqref{eq.3.1}, \eqref{eq.3.2} we arrive at
\begin{equation}\label{eq.3.15}
q^{(i)}_{a^{(i)}n}(a) = q^{(j)}_{a^{(j)}n}(a),
\end{equation}
in which the index $n$ is defined by \eqref{eq.3.13} (similarly to that it is defined in \eqref{eq.3.11} and \eqref{eq.3.14}).
Finally, from \eqref{eq.3.13} and \eqref{eq.3.15} we arrive at
\begin{equation}\label{eq.3.19}
q^{(i)}_{l\mathsf{gcd}\left(\mathcal{B}^{(i)}(a)\right)}=q^{(j)}_{l\mathsf{gcd}\left(\mathcal{B}^{(j)}(a)\right)}, \quad l=0,1,\ldots.
\end{equation}

\textit{Property 2}. Let $a_1$ $=\big(a^{(1)}_1,$ $a^{(2)}_1,$ \ldots, $a^{(d)}_1\big)$ and $a_2$ $=\big(a^{(1)}_2,$ $a^{(2)}_2,$ \ldots, $a^{(d)}_2\big)$, $a_1\in\mathcal{A}$, $a_2\in\mathcal{A}$.
It follows from the assumption of the theorem
\begin{eqnarray}
r^{(i)}_{a^{(i)}_1n}(a_1)&=& r^{(i)}_{a^{(i)}_2n}(a_2),\label{eq.3.3}\\
\widetilde r^{(i)}_{a^{(i)}_1n}(a_1) &=& \widetilde r^{(i)}_{a^{(i)}_2n}(a_2),\label{eq.3.4}
\end{eqnarray}
where  $a^{(i)}_1$ and $a^{(i)}_2$ ($i=1,2,\ldots,d$) are generally the rational numbers depending on $a_1$ and $a_2$, whereas $a^{(i)}_1n$ and $a^{(i)}_2n$ are integer numbers.

It follows from \eqref{eq.3.3} and \eqref{eq.3.4}   that
$$a^{(i)}_2\mathsf{gcd}\big(\mathcal{B}^{(i)}(a_1)\big)=a^{(i)}_1\mathsf{gcd}\big(\mathcal{B}^{(i)}(a_2)\big),$$
and since $B^{(i)}(a_1)$ and $B^{(i)}(a_2)$ are integer, then $a^{(i)}_2\mathsf{gcd}\big(\mathcal{B}^{(i)}(a_1)\big)$ and $a^{(i)}_1\mathsf{gcd}\big(\mathcal{B}^{(i)}(a_2)\big)$ are assumed to be integer. Thus, the probability distributions $r^{(i)}_{a^{(i)}_1n}(a_1)$ and $\widetilde r^{(i)}_{a^{(i)}_1n}(a_1)$ on the left-hand sides of \eqref{eq.3.3} and \eqref{eq.3.4}, respectively,  are defined for the integer indices $a^{(i)}_1n$, and the probability distributions $r^{(i)}_{a^{(i)}_2n}(a_2)$ and $\widetilde r^{(i)}_{a^{(i)}_2n}(a_2)$ on the right-hand sides of \eqref{eq.3.3} and \eqref{eq.3.4}, respectively, are defined for the integer indices $a^{(i)}_2n$.
Then, similarly to \eqref{eq.3.13}, the quantities $q^{(i)}_{a^{(i)}_1n}(a_1)$ and $q^{(i)}_{a^{(i)}_2n}(a_2)$ are reckoned to be defined for the indices
\begin{equation}\label{eq.3.7}
n=\frac{l}{a^{(i)}_1}\mathsf{gcd}\left(\mathcal{B}^{(i)}(a_1)\right)=\frac{l}{a^{(i)}_2}\mathsf{gcd}\left(\mathcal{B}^{(i)}(a_2)\right), \quad l=0,1,\ldots.
\end{equation}
So, from \eqref{eq.3.3} and \eqref{eq.3.4} and the basic equations given by \eqref{eq.3.1}, \eqref{eq.3.2} we arrive at
\begin{equation}\label{eq.3.6}
q^{(i)}_{a^{(i)}_1n}(a_1) = q^{(i)}_{a^{(i)}_2n}(a_2),
\end{equation}
in which the index $n$ is defined by \eqref{eq.3.7} (similarly to that it is defined in \eqref{eq.3.3} and \eqref{eq.3.4}). Finally, from \eqref{eq.3.7} and \eqref{eq.3.6} we arrive at
\begin{equation}\label{eq.3.8}
q^{(i)}_{l\mathsf{gcd}\left(\mathcal{B}^{(i)}(a_1)\right)}(a_1) = q^{(i)}_{l\mathsf{gcd}\left(\mathcal{B}^{(i)}(a_2)\right)}(a_2), \quad l=0,1,\ldots.
\end{equation}

\begin{rem}\label{R2}
Note that $r^{(i)}_k(\mathbf{1})=r^{(j)}_k(\mathbf{1})$ and $\widetilde r^{(i)}_k(\mathbf{1})=\widetilde r^{(j)}_k(\mathbf{1})$ for all $i=1,2,\ldots, d$ and $j=1,2,\ldots, d$.
\end{rem}

\section{Proof of Theorem \ref{T1}}\label{S4}

In this section we prove the main theorem.

\subsection{Plan of the proof}\label{S4.1} We first prove the theorem under the assumption that $\mathbf{x}_t(a)$, $a\in\mathcal{A}$, is a vector with integer components, that is, $B^{(i)}(a)$, $i=1,2,\ldots,d$, are assumed to be integer random variables. Then, being proved under this assumption, the results can be extended to the case when the components of the vector are rational numbers, and then to the limiting case when the aforementioned random variables are assumed to be continuous. In the case when $B^{(i)}(a)$, $i=1,2,\ldots,d$, are integer random variables, the proof is provided by the same scheme as the related part of the proof of Theorem 2.1 in \cite{A}. First, we derive an expression for the limiting probability
\begin{equation}\label{eq.4.1}
\lim_{t\to\infty}\mathsf{P}\{\|\mathbf{S}_t(\mathbf{1})\|>z~|~\|\mathbf{S}_{t-1}(\mathbf{1})\|=z\}
\end{equation}
Then we compare that expression with
\begin{equation}\label{eq.4.2}
\lim_{t\to\infty}\mathsf{P}\{\|\mathbf{S}_t(a)\|>z~|~\|\mathbf{S}_{t-1}(a)\|=z\}, \quad a\in\mathcal{A}.
\end{equation}

\subsection{Presentation of \eqref{eq.4.1} (simple case)}\label{S4.2} Recall that in the simple case we derive an expression for \eqref{eq.4.1}. The derivation is based on combinatorial arguments.
 The expression for the limiting probability in \eqref{eq.4.1} is presented in the form
 \begin{equation}\label{eq.4.3}
\lim_{t\to\infty}\mathsf{P}\{\|\mathbf{S}_t(\mathbf{1})\|>z~|~\|\mathbf{S}_{t-1}(\mathbf{1})\|=z\}=\frac{N(\mathbf{1})}{D(\mathbf{1})}.
\end{equation}
For the denominator $D(\mathbf{1})$ we have the expression
\begin{equation}\label{eq.4.4}
D(\mathbf{1})=\sum_{\substack {k_1+k_2+\ldots+k_d=z/\mathsf{gcd}(\mathcal{B}^{(i)}(\mathbf{1}))\\k_i\geq0, \quad i=1,2,\ldots,d}}\prod_{i=1}^{d}q^{(i)}_{k_i\mathsf{gcd}(\mathcal{B}^{(i)}(\mathbf{1}))}.
\end{equation}
Note that the index multiplier $\mathsf{gcd}(\mathcal{B}^{(i)}(\mathbf{1}))$ that appears on the right-hand side of \eqref{eq.4.4} is a same value for all $i=1,2,\ldots,d$, that is, $\mathsf{gcd}(\mathcal{B}^{(i)}(\mathbf{1}))$ $=\mathsf{gcd}(\mathcal{B}^{(j)}(\mathbf{1}))$ ($1=1,2,\ldots,d$, $j=1,2,\ldots,d$), and the value $z/\mathsf{gcd}(\mathcal{B}^{(i)}(\mathbf{1}))$ is integer. As well $q^{(i)}_{k_i\mathsf{gcd}(\mathcal{B}^{(i)}(\mathbf{1}))}$ is the same for all $i=1,2,\ldots,d$. Hence, with the new notation $\mathsf{gcd}(\cdot)=\mathsf{gcd}(\mathcal{B}^{(i)}(\mathbf{1}))$ relation \eqref{eq.4.4} can be rewritten in the following simplified form
\begin{equation}\label{eq.4.4.1}
D(\mathbf{1})=\sum_{\substack {k_1+k_2+\ldots+k_d=z/\mathsf{gcd}(\cdot)\\k_i\geq0, \quad i=1,2,\ldots,d}}\prod_{i=1}^{d}q_{k_i\mathsf{gcd(\cdot)}}(\mathbf{1}),
\end{equation}
where $q_{k_i\mathsf{gcd(\cdot)}}(\mathbf{1})=q^{(i)}_{k_i\mathsf{gcd(\cdot)}}(\mathbf{1})$ is the new notation.

For the numerator $N(\mathbf{1})$ we have the expression
\begin{equation}
\begin{aligned}\label{eq.4.5}
N(\mathbf{1})&=\sum_{\substack {k_1+k_2+\ldots+k_d=z/\mathsf{gcd}(\cdot)\\k_i\geq0, \quad i=1,2,\ldots,d}}\left(\prod_{i=1}^{d}q_{k_i\mathsf{gcd}(\cdot)}(\mathbf{1})\right.\\
\times&\mathsf{P}\left\{\sum_{i=1}^{d}\bigg(B^{(i)}(\mathbf{1})I_i^+\right. -\widetilde B^{(i)}(\mathbf{1})I_i^-I_i(k_i)\\
&\quad \quad  +\big[\widetilde B^{(i)}(\mathbf{1})-2k_i\mathsf{gcd}(\cdot)\big]I_i^-[1-I_i(k_i)]\bigg)>0\biggl\}\biggl),
\end{aligned}
\end{equation}
where
\begin{eqnarray*}
  I_i^+ &=& \mathsf{I}\{\text{class} \ i \ \text{arrival is positive}\}, \\
  I_i^- &=& \mathsf{I}\{\text{class} \ i \ \text{arrival is negative}\} \quad (I_i^++I_i^-=1),  \\
  I_i(k_i) &=& \mathsf{I}\left\{\widetilde{B}^{(i)}(\mathbf{1})\leq k_i\mathsf{gcd}(\cdot)\right\}.
\end{eqnarray*}
The coefficient 2, that appears in the expression
$$
\big[\widetilde B^{(i)}(\mathbf{1})-2k_i\mathsf{gcd}(\cdot)\big]I_i^-[1-I_i(k_i)]
$$
in the last line of \eqref{eq.4.5} is explained as follows. If immediately before a negative class $i$ arrival  the level of workload was $k_i\mathsf{gcd}(\cdot)$, then after the arrival this level will increase only if the total size of the arrival is greater than $2k_i\mathsf{gcd}(\cdot)$. The value of $k_i\mathsf{gcd}(\cdot)$ of that level will be discharged, and the remaining part that is greater than $k_i\mathsf{gcd}(\cdot)$ will present the new class $i$ workload level.

\subsection{Presentation of \eqref{eq.4.2} (regular case)}\label{S4.3} The limiting probability in \eqref{eq.4.2} is presented in the form
\begin{equation}\label{eq.4.6}
\lim_{t\to\infty}\mathsf{P}\{\|\mathbf{S}_t(a)\|>z~|~\|\mathbf{S}_{t-1}(a)\|=z\}=\frac{N(a)}{D(a)}.
\end{equation}
For the denominator $D(a)$ we have the expression
\begin{equation}\label{eq.4.7}
D(a)=\sum_{\substack {k_1\mathsf{gcd}(\mathcal{B}^{(1)}(a))+k_2\mathsf{gcd}(\mathcal{B}^{(2)}(a))+\ldots+k_d\mathsf{gcd}(\mathcal{B}^{(d)}(a))=z\\k_i\geq0, \quad i=1,2,\ldots,d}}\prod_{i=1}^{d}q^{(i)}_{k_i\mathsf{gcd}(\mathcal{B}^{(i)}(a))}(a).
\end{equation}
According to the presentation $\mathsf{gcd}(\mathcal{B}^{(i)}(a))=a^{(i)}\mathsf{gcd}(\cdot)$, $i=1,2,\ldots$, we have
$$q_{l\mathsf{gcd}(\mathcal{B}^{(i)}(a))}^{(i)}(a)=q_{a^{(i)}l\mathsf{gcd}(\cdot)}^{(i)}(a), \quad l=0,1,\ldots,$$
and according to \eqref{eq.3.8}, for each $i=1,2,\ldots$, we have the identity
$$q_{l\mathsf{gcd}(\mathcal{B}^{(i)}(a))}^{(i)}(a)=q_{l\mathsf{gcd}(\cdot)}(\mathbf{1}), \quad l=0,1,\ldots.$$
Hence,
$$
q_{a^{(i)}l\mathsf{gcd}(\cdot)}^{(i)}(a)=q_{l\mathsf{gcd}(\cdot)}(\mathbf{1}), \quad l=0,1,\ldots.
$$
Then, \eqref{eq.4.7} can be rewritten in the form
\begin{equation}\label{eq.4.8}
\begin{aligned}
D(a)&=\sum_{\substack {a^{(1)}k_1+a^{(2)}k_2+\ldots+a^{(d)}k_d=z/\mathsf{gcd}(\cdot)\\k_i\geq0, \quad i=1,2,\ldots,d}}\prod_{i=1}^{d}q^{(i)}_{a^{(i)}k_i\mathsf{gcd}(\cdot)}(a)\\
&=\sum_{\substack {a^{(1)}k_1+a^{(2)}k_2+\ldots+a^{(d)}k_d=z/\mathsf{gcd}(\cdot)\\k_i\geq0, \quad i=1,2,\ldots,d}}\prod_{i=1}^{d}q_{k_i\mathsf{gcd}(\cdot)}(\mathbf{1}).
\end{aligned}
\end{equation}
For the numerator $N(a)$ we have the expression
\begin{equation}
\begin{aligned}\label{eq.4.9}
N(a)&=\sum_{\substack {a^{(1)}k_1+a^{(2)}k_2+\ldots+a^{(d)}k_d=z/\mathsf{gcd}(\cdot)\\k_i\geq0, \quad i=1,2,\ldots,d}}\left(\prod_{i=1}^{d}q_{k_i\mathsf{gcd}(\cdot)}(\mathbf{1})\right.\\
\times&\mathsf{P}\left\{\sum_{i=1}^{d}\bigg(B^{(i)}(a)I_i^+\right.\\
& \quad\quad-\widetilde B^{(i)}(a)I_i^-\mathsf{I}\{\widetilde B^{(i)}(a)\leq k_i\mathsf{gcd}(\mathcal{B}_i(a))\}\\
& \quad\quad +\big[\widetilde B^{(i)}(a)-2k_i\mathsf{gcd}(\mathcal{B}^{(i)}(a))\big]\\
& \quad\quad\times I_i^-\mathsf{I}\{\widetilde B^{(i)}(a)> k_i\mathsf{gcd}(\mathcal{B}_i(a))\}\bigg)>0\biggl\}\biggl).
\end{aligned}
\end{equation}
Then, assuming that the probability space for the random variables $B^{(i)}(a)$, $\widetilde B^{(i)}(a)$, $B^{(i)}(\mathbf{1})$ and $\widetilde B^{(i)}(\mathbf{1})$, $i=1,2,\ldots, d$, is common, we have the relations
$$
B^{(i)}(a)=a^{(i)}B^{(i)}(\mathbf{1}),
$$
$$
\widetilde B^{(i)}(a)=a^{(i)}\widetilde B^{(i)}(\mathbf{1}).
$$
Substituting these relationships into \eqref{eq.4.9}, we obtain
\begin{equation}
\begin{aligned}\label{eq.4.10}
N(a)&=\sum_{\substack {a^{(1)}k_1+a^{(2)}k_2+\ldots+a^{(d)}k_d=z/\mathsf{gcd}(\cdot)\\k_i\geq0, \quad i=1,2,\ldots,d}}\left(\prod_{i=1}^{d}q_{k_i\mathsf{gcd}(\cdot)}(\mathbf{1})\right.\\
\times&\mathsf{P}\left\{\sum_{i=1}^{d}\bigg(a^{(i)}B^{(i)}(\mathbf{1})I_i^+\right.-a^{(i)}\widetilde B^{(i)}(\mathbf{1})I_i^-I_i(k_i)\\
& \quad\quad \big[a^{(i)}\widetilde B^{(i)}(\mathbf{1})-2a^{(i)}k_i\mathsf{gcd}(\cdot)\big] I_i^-[1-I_i(k_i)]\bigg)>0\biggl\}\biggl).
\end{aligned}
\end{equation}

\subsection{Proof of Theorem \ref{T1} in the case when $\mathbf{x}_t$ is an integer-valued random vector}\label{S4.4}
Consider two series of random variables
\begin{equation}
\begin{aligned}\label{eq.5.1}
&\sum_{i=1}^{d}\bigg(B^{(i)}(\mathbf{1})I_i^+ -\widetilde B^{(i)}(\mathbf{1})I_i^-I_i(k_i)\\
&\quad \quad  +\big[\widetilde B^{(i)}(\mathbf{1})-2k_i\mathsf{gcd}(\cdot)\big]I_i^-[1-I_i(k_i)]\bigg)
\end{aligned}
\end{equation}
and
\begin{equation}\label{eq.5.2}
\begin{aligned}
&\sum_{i=1}^{d}\bigg(a^{(i)}B^{(i)}(\mathbf{1})I_i^+ -a^{(i)}\widetilde B^{(i)}(\mathbf{1})I_i^-I_i(k_i)\\
& \quad\quad +\big[a^{(i)}\widetilde B^{(i)}(\mathbf{1})-2a^{(i)}k_i\mathsf{gcd}(\cdot)\big] I_i^-[1-I_i(k_i)]\bigg).
\end{aligned}
\end{equation}
The first series is associated with the element $\mathbf{1}\in\mathcal{A}$ and the second series is associated with the element $a\in\mathcal{A}$ (in fact an arbitrary element from the set $\mathcal{A}$). The word \textit{series} is related to the positive integer vector of parameters ($k_1$, $k_2$,\ldots,$k_d$).

For any nonnegative integer $k_i$, $i=1,2,\ldots,d$, both of the random variables defined by \eqref{eq.5.1} and \eqref{eq.5.2} have equal expectations, but the variance of the random variable defined by \eqref{eq.5.2} is not smaller than that defined by \eqref{eq.5.1}. Below, we recall and discuss the properties of such random variables.

Recall that for two probability distribution functions $F(x)=\mathsf{P}\{X\leq x\}$ and $G(x)=\mathsf{P}\{Y\leq x\}$ the notation $F\leq_{\mathrm{conv}}G$ means the justice of the inequality
$$
\mathsf{E}[c(X)]\leq\mathsf{E}[c(Y)]
$$
for any convex function $c(x)$ and means that $F(x)$ is smaller than $G(x)$ in convex sense. (The left- and right-hand sides of the inequality are assumed to exist.)

Recall that if $F(x)$ and $G(x)$ are probability distribution functions of positive random variables $X$ and $Y$, respectively, with the properties $\mathsf{E}X=\mathsf{E}Y$ and $\mathsf{var}(X)\leq \mathsf{var}(Y)$, then $F\leq_{\mathrm{conv}}G$.

In the case when $$X=X_1+X_2+\ldots+X_d,$$ $$Y=a^{(1)}X_1+a^{(2)}X_2+\ldots+a^{(d)}X_d,$$ where $X_1$, $X_2$,\ldots, $X_d$ are positive independent identically distributed random variables with finite variance, $a^{(1)}+a^{(2)}+\ldots+a^{(d)}=d$, we have $\mathsf{E}X=\mathsf{E}Y$, $\mathsf{var}(X)\leq \mathsf{var}(Y)$,
and the distribution of $X$ is smaller than the distribution of $Y$ in convex sense.
In this case, we have the following statement.

\begin{lem}\label{L1}
There exists a point $x^*$ such that $\mathsf{P}\{X> x\}\leq\mathsf{P}\{Y> x\}$ for all $x\geq x^*$.
\end{lem}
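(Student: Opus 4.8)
The plan is to work with the survival functions $\overline F_X(x)=\mathsf P\{X>x\}$ and $\overline F_Y(x)=\mathsf P\{Y>x\}$ and to show that their difference is eventually nonnegative; the last crossing point then furnishes $x^*$. First I would record the analytic content of the convex order $F\le_{\mathrm{conv}}G$ already established above. Applying it to the convex functions $c(u)=(u-t)_+$, $t\ge0$, yields the stop-loss (increasing convex) inequality $\mathsf E[(X-t)_+]\le\mathsf E[(Y-t)_+]$, which, since $X,Y\ge0$, reads $\int_t^\infty\overline F_X(u)\,\mathrm du\le\int_t^\infty\overline F_Y(u)\,\mathrm du$ for every $t\ge0$. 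Writing $\Delta(x)=\overline F_Y(x)-\overline F_X(x)$ and $G(t)=\int_t^\infty\Delta(u)\,\mathrm du$, the equal-mean hypothesis $\mathsf EX=\mathsf EY$ gives $G(0)=0$ and $G(\infty)=0$, the stop-loss inequality gives $G(t)\ge0$ for all $t\ge0$, and $G'(t)=-\Delta(t)$. Thus the lemma is equivalent to the assertion that $G$ is nonincreasing on some half-line $[x^*,\infty)$, i.e. that $\Delta$ has its last sign change before $x^*$ and stays nonnegative thereafter.

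The heart of the matter is therefore to show that $\Delta$ changes sign only finitely often and ends nonnegative. My preferred route is to establish the sharper \emph{single-crossing} property: there is one point $x^*$ with $\Delta\le0$ on $[0,x^*]$ and $\Delta\ge0$ on $[x^*,\infty)$. To reach it I would exploit the structure $X=\sum_iX_i$, $Y=\sum_ia^{(i)}X_i$ with $X_i$ i.i.d.\ positive and $\sum_ia^{(i)}=d$, which says precisely that the coefficient vector $(a^{(1)},\dots,a^{(d)})$ majorizes $(1,\dots,1)$. By the Hardy--Littlewood--P\'olya theorem the former is reached from the latter by finitely many elementary two-coordinate spreading transfers $(u,v)\mapsto(u+\theta,v-\theta)$ with the remaining coordinates frozen. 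Since eventual domination of survival functions is transitive, it suffices to treat one such transfer, that is, to compare $W+uX_1+vX_2$ with $W+(u+\theta)X_1+(v-\theta)X_2$, where $W\ge0$ is the independent sum of the frozen terms. This reduces the problem to an essentially two-dimensional tail comparison.

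For the elementary step I would compare upper tails directly. As all summands are nonnegative, $Y\ge a_{\max}X_{i_0}$ for the maximizing index $i_0$, where $a_{\max}=\max_ia^{(i)}>1$ in the regular case, so $\overline F_Y(x)\ge\overline F(x/a_{\max})$; more usefully, retaining every term shows that in the one-big-jump regime $\overline F_Y(x)$ behaves like $\sum_i\overline F(x/a^{(i)})$ while $\overline F_X(x)$ behaves like $d\,\overline F(x)$. The required inequality then follows from convexity of $t\mapsto t^{\alpha}$: when $\overline F$ is regularly varying of index $\alpha$ (necessarily $\alpha>2$ under the finite-variance hypothesis) one has $\sum_i(a^{(i)})^{\alpha}\ge d=\sum_i1$, so the upper tail of the more spread variable dominates. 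A parallel local estimate near a finite essential supremum (both variables share the endpoint $d\cdot\mathrm{ess\,sup}\,X_1$) gives the same conclusion, the contributing region for the more spread weights being the larger.

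The main obstacle is exactly the passage from the integrated inequality $G\ge0$ to the pointwise statement $\Delta\ge0$ near infinity: convex order controls only averaged tails and by itself permits $\Delta$ to oscillate in sign while $G$ decays to $0$. The delicate point is to rule out infinitely many sign changes of $\Delta$ as $x\to\infty$ \emph{uniformly}, which is not automatic, since the relevant configurations (the values of $W$, or of $s=\sum_iX_i$) are unbounded and the crossing thresholds could in principle drift to infinity. I expect the cleanest resolution is to prove the single-crossing property for the two-coordinate step by a variation-diminishing (total positivity) argument on the corresponding densities, so that the unique crossing point supplies $x^*$; absent a single tail bound valid for every tail type, this sign-change control is the technically hardest ingredient. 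It is precisely this phenomenon --- eventual rather than global tail domination --- that forces the weaker conclusion ``for all $z\ge z^*(a)$'' in Definition \ref{D2}.
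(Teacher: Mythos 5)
Your reduction scheme (stop-loss inequality from the convex order, majorization of $(a^{(1)},\ldots,a^{(d)})$ over $(1,\ldots,1)$, Hardy--Littlewood--P\'olya decomposition into elementary transfers, transitivity of eventual tail domination) is sensible, but the elementary step itself --- eventual \emph{pointwise} domination of survival functions for a single two-coordinate spread --- is exactly where your argument stops being a proof, and that step is the entire content of the lemma. You establish it only in two regimes: when $\overline F$ is regularly varying of index $\alpha$ (via one-big-jump asymptotics and $\sum_i (a^{(i)})^{\alpha}\geq d$), and when $X_1$ has a finite essential supremum with polynomial local behaviour at the endpoint. The lemma, however, assumes nothing beyond positivity and finite variance: it covers Gamma-type exponential tails, Weibull tails $\exp(-x^{\beta})$ with $\beta<1$, lognormal-type tails, and lattice laws with unbounded support, none of which fall in either of your regimes. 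For these you offer no argument, as you yourself concede (``absent a single tail bound valid for every tail type''). The appeal to a variation-diminishing/total-positivity argument is a hope rather than a proof: no TP${}_2$ structure is exhibited for the two-coordinate step, and, as you correctly note, convex order alone is compatible with infinitely many sign changes of $\Delta(x)=\overline F_Y(x)-\overline F_X(x)$, so something beyond $G(t)=\int_t^\infty\Delta(u)\,\mathrm{d}u\geq 0$ must be supplied. As written, the proposal proves the lemma only for a restricted sub-class of distributions.

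For comparison, the paper argues entirely in transform space, which is what lets it avoid any case analysis on tail type. With $\pi(s)$ the Laplace--Stieltjes transform of $X_1$, the transforms of $X$ and $Y$ are $\pi^d(s)$ and $\prod_{i=1}^{d}\pi\big(a^{(i)}s\big)$; expanding both to second order at $s=0$, the linear terms agree (equal means) and the quadratic coefficients differ by $\mathsf{var}(X_1)\big(\sum_{i=1}^{d}[a^{(i)}]^2-d\big)\geq 0$, with equality only for $a=\mathbf{1}$. Hence $\pi^d(s)\leq\prod_{i=1}^{d}\pi\big(a^{(i)}s\big)$ on some interval $(0,s^*]$, and the tail inequality for all $x\geq x^*$ is then extracted by inverting the transforms and continuity. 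The small-$s$ comparison encodes the large-$x$ comparison uniformly over all tail types --- regularly varying, stretched-exponential, light, or bounded --- which is precisely the uniformity your approach lacks. (The paper's final inversion step is itself terse, but it is a single argument rather than a patchwork of regimes.) If you want to keep your structure, the natural repair is to replace your regular-variation estimate by this transform comparison applied to the two-coordinate transfer; at that point, though, the majorization reduction becomes unnecessary, since the transform comparison handles all $d$ coordinates at once.
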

\begin{proof}
Let $\pi(s)$ denote the Laplace-Stiltjes transform of $X_1$. Then, the Laplace-Stieltjes transform of $X$ is $\pi^d(s)$ and the Laplace-Stieltjes transform of $Y$ is $\prod_{i=1}^{d}\pi\big(a^{(i)}s\big)$. For small positive $s$ we have the expansions
\begin{equation}\label{eq.4.14}
\pi^d(s)=1-sd\mathsf{E}X_1+s^2\left(d\mathsf{var}(X_1)+d^2(\mathsf{E}X_1)^2\right)+o(s^2),
\end{equation}
\begin{equation}\label{eq.4.15}
\begin{aligned}
\prod_{i=1}^{d}\pi\big(a^{(i)}s\big)&=1-sd\mathsf{E}X_1\\
+&s^2\left(\mathsf{var}(X_1)\sum_{i=1}^{d}\left[a^{(i)}\right]^2+d^2(\mathsf{E}X_1)^2\right)
+o(s^2).
\end{aligned}
\end{equation}
Since $\sum_{i=1}^d\big[a^{(i)}]^2\geq d$, then comparing the terms in \eqref{eq.4.14} and \eqref{eq.4.15} we arrive at the conclusion that there exists $s^*>0$ such that for all $0<s\leq s^*$, we have $\pi^d(s)\leq\prod_{i=1}^{d}\pi\big(a^{(i)}s\big)$, where the equality sign holds in the only case of $a^{(i)}\equiv1$ for all $i=1,2,\ldots,d$. Then, inverting the transforms and using the continuity arguments we find that
there exists a point $x^*$ such that for all $x\geq x^*$ the inequality $\mathsf{P}\{X> x\}\leq\mathsf{P}\{Y> x\}$ is true.
\end{proof}

\begin{rem}
For the statements similar to that of Lemma \ref{L1} and the results on convex order relation see
\cite{K}, \cite{M}. Note, that the statement of the theorem remains true if the random variables $X_i$, $i=1,2,\ldots,d$, are not necessarily positive as in the case given by\eqref{eq.5.1} and \eqref{eq.5.2}. In that case, the relation between the probability distributions $\mathsf{P}\{X\leq x\}$ and $\mathsf{P}\{Y\leq x\}$ is not convex in general,
but the proof of Lemma \ref{L1} remains the same with the only difference that instead of Laplace-Stieltjes transforms we would need to use characteristic functions.
\end{rem}

\medskip
Consider now the two conditional expectations
\begin{equation}
\begin{aligned}\label{eq.5.3}
&\mathsf{E}\left\{\sum_{i=1}^{d}\bigg(B^{(i)}(\mathbf{1})I_i^+\right. -\widetilde B^{(i)}(\mathbf{1})I_i^-I_i(k_i^{*})\\
&+\big[\widetilde B^{(i)}(\mathbf{1})-2k_i^{*}\mathsf{gcd}(\cdot)\big]
I_i^-[1-I_i(k_i^{*})]\bigg)~\biggl|~\mathsf{gcd}(\cdot)\sum_{i=1}^{d}k_i^*=Kd\biggl\}
\end{aligned}
\end{equation}
and
\begin{equation}\label{eq.5.4}
\begin{aligned}
&\mathsf{E}\left\{\sum_{i=1}^{d}\bigg(a^{(i)}B^{(i)}(\mathbf{1})I_i^+\right.-a^{(i)}\widetilde B^{(i)}(\mathbf{1})I_i^-I_i(k_i^{*})\\
& \quad\quad +a^{(i)}\big[\widetilde B^{(i)}(\mathbf{1})-2k_i^{*}\mathsf{gcd}(\cdot)\big]\\
& \quad\quad\quad \times\left. I_i^-[1-I_i(k_i^{*})]\right)~\biggl|~\mathsf{gcd}(\cdot)\sum_{i=1}^{d}k_i^{*}=Kd\biggl\},
\end{aligned}
\end{equation}
which are in fact the conditional expectations of random variables similar to those given in \eqref{eq.5.1} and \eqref{eq.5.2}, respectively, with the only difference that instead of a fixed deterministic vector ($k_1$, $k_2$,\ldots,$k_d$) we consider the random vector ($k_1^*$, $k_2^*$,\ldots,$k_d^*$) and the conditioning on $\sum_{i=1}^dk_i^*$. The random vector ($k_1^*$, $k_2^*$,\ldots,$k_d^*$) is defined as follows.
The integer random variables $k_1^*$, $k_2^*$,\ldots, $k_d^*$ are assumed to be independent identically distributed and bounded by an integer value $Kd/\mathsf{gcd}(\cdot)$, i.e. $0\leq k_i^*\leq Kd/\mathsf{gcd}(\cdot)$, where $K$ is now a new series parameter.  Since $a^{(1)}+a^{(2)}+\ldots+a^{(d)}=d$, then it is readily seen that the conditional expectations given by \eqref{eq.5.3} and \eqref{eq.5.4}
are the same, since the expectations of the random variables in \eqref{eq.5.1} and \eqref{eq.5.2} coincide for all fixed vectors ($k_1$, $k_2$,\ldots,$k_d$). However, the conditional variance
\begin{equation*}
\begin{aligned}\label{eq.5.7}
&\mathsf{var}\left\{\sum_{i=1}^{d}\bigg(B^{(i)}(\mathbf{1})I_i^+\right. -\widetilde B^{(i)}(\mathbf{1})I_i^-I_i(k_i^{*})\\
&+\big[\widetilde B^{(i)}(\mathbf{1})-2k_i^{*}\mathsf{gcd}(\cdot)\big]
I_i^-[1-I_i(k_i^{*})]\bigg)~\biggl|~\mathsf{gcd}(\cdot)\sum_{i=1}^{d}k_i^*=Kd\biggl\}
\end{aligned}
\end{equation*}
is not greater than the corresponding conditional variance
\begin{equation*}\label{eq.5.8}
\begin{aligned}
&\mathsf{var}\left\{\sum_{i=1}^{d}\bigg(a^{(i)}B^{(i)}(\mathbf{1})I_i^+\right.-a^{(i)}\widetilde B^{(i)}(\mathbf{1})I_i^-I_i(k_i^{*})\\
& \quad\quad +a^{(i)}\big[\widetilde B^{(i)}(\mathbf{1})-2k_i^{*}\mathsf{gcd}(\cdot)\big]\\
& \quad\quad\quad \times\left. I_i^-[1-I_i(k_i^{*})]\right)~\biggl|~\mathsf{gcd}(\cdot)\sum_{i=1}^{d}k_i^{*}=Kd\biggl\}.
\end{aligned}
\end{equation*}

Our next step is to prove that for large $K$
\begin{equation}
\begin{aligned}\label{eq.5.9}
&\mathsf{P}\left\{\sum_{i=1}^{d}\bigg(B^{(i)}(\mathbf{1})I_i^+\right.
-\widetilde B^{(i)}(\mathbf{1})\}I_i^-I_i(k_i^*)\\
& \quad\quad +\big[\widetilde B^{(i)}(\mathbf{1})-2k_i^*\mathsf{gcd}(\cdot)\big]\\
& \quad\quad \times I_i^-[1-I_i(k_i^*)]\bigg)>0~\biggl|~\mathsf{gcd}(\cdot)\sum_{i=1}^{d}k_i^{*}=Kd\biggl\}
\end{aligned}
\end{equation}
is not greater than
\begin{equation}\label{eq.5.10}
\begin{aligned}
&\mathsf{P}\left\{\sum_{i=1}^{d}\bigg(a^{(i)}B^{(i)}(\mathbf{1})I_i^+\right.-a^{(i)}\widetilde B^{(i)}(\mathbf{1})I_i^-I_i(k_i^*)\\
& \quad\quad +a^{(i)}\big[\widetilde B^{(i)}(\mathbf{1})-2k_i^*\mathsf{gcd}(\cdot)\big]\\
& \quad\quad\times I_i^-[1-I_i(k_i^*)]\bigg)>0~\biggl|~\mathsf{gcd}(\cdot)\sum_{i=1}^{d}k_i^{*}=Kd\biggl\}.
\end{aligned}
\end{equation}

Assume that $K$ increases to infinity, and 
let $K_1$, $K_2$,\ldots,$K_d$ be some values satisfying the equality $K_1+K_2+\ldots+K_d=Kd/\mathsf{gcd}(\cdot)$.
Then, applying the arguments of Lemma \ref{L1} when $K$ is large, one can arrive at the conclusion that the probability
\begin{equation}
\begin{aligned}\label{eq.5.5}
&\mathsf{P}\left\{\sum_{i=1}^{d}\bigg(B^{(i)}(\mathbf{1})I_i^+\right.
-\widetilde B^{(i)}(\mathbf{1})I_i^-I_i(K_i)\\
&\quad \quad  +\big[\widetilde B^{(i)}(\mathbf{1})-2K_i\mathsf{gcd}(\cdot)\big] I_i^-[1-I_i(K_i)]\bigg)>0\biggl\}
\end{aligned}
\end{equation}
is not greater than
\begin{equation}\label{eq.5.6}
\begin{aligned}
&\mathsf{P}\left\{\sum_{i=1}^{d}a^{(i)}\bigg(B^{(i)}(\mathbf{1})I_i^+\right.
-\widetilde B^{(i)}(\mathbf{1})I_i^-I_i(K_i)\\
&\quad \quad  +\big[\widetilde B^{(i)}(\mathbf{1})-2K_i\mathsf{gcd}(\cdot)\big] I_i^-[1-I_i(K_i)]\bigg)>0\biggl\}.
\end{aligned}
\end{equation}

Indeed, \eqref{eq.5.5} and \eqref{eq.5.6} can be, respectively, rewritten in the forms
\begin{equation}
\begin{aligned}\label{eq.5.11}
&\mathsf{P}\left\{\sum_{i=1}^{d}\bigg(\big[B^{(i)}(\mathbf{1})+K_i\mathsf{gcd}(\cdot)\big]I_i^+\right.\\
&\quad\quad -\big[\widetilde B^{(i)}(\mathbf{1})-K_i\mathsf{gcd}(\cdot)\big]I_i^-I_i(K_i)\\
&\quad \quad  +\big[\widetilde B^{(i)}(\mathbf{1})-K_i\mathsf{gcd}(\cdot)\big] I_i^-[1-I_i(K_i)]\bigg)>Kd\biggl\}
\end{aligned}
\end{equation}
and
\begin{equation}
\begin{aligned}\label{eq.5.12}
&\mathsf{P}\left\{\sum_{i=1}^{d}a^{(i)}\bigg(\big[B^{(i)}(\mathbf{1})+K_i\mathsf{gcd}(\cdot)\big]I_i^+\right.\\
&\quad\quad -\big[\widetilde B^{(i)}(\mathbf{1})-K_i\mathsf{gcd}(\cdot)\big]I_i^-I_i(K_i)\\
&\quad \quad  +\big[\widetilde B^{(i)}(\mathbf{1})-K_i\mathsf{gcd}(\cdot)\big] I_i^-[1-I_i(K_i)]\bigg)>Kd\biggl\}
\end{aligned}
\end{equation}
Note, that the expressions in \eqref{eq.5.11} and \eqref{eq.5.12} characterize the probabilities that the corresponding positive random variables are greater than $Kd$. This means that the probability distribution associated with \eqref{eq.5.11} is less than that probability distribution associated with \eqref{eq.5.12} in convex sense. To apply Lemma \ref{L1} for large $K$ note as follows. Let $\mathcal{I}$ denote the set of indices $i$ for which $I^+_i=1$, let $\mathcal{J}$ denote the set of indices $i$ for which $I^-I_i(K_i)=1$ and let $\mathcal{K}$ denote the set of indices $i$ for which $I^-[1-I_i(K_i)]=1$. The sets $\mathcal{I}$, $\mathcal{J}$ and $\mathcal{K}$ are disjoint, and $\mathcal{I}\cup\mathcal{J}\cup\mathcal{K}$ $=\{1,2,\ldots,d\}$.
Then, \eqref{eq.5.11} and \eqref{eq.5.12}, respectively, can be rewritten by
\begin{equation}\label{eq.5.13}
\begin{aligned}
&\mathsf{P}\left\{\sum_{i\in\mathcal{I}}B_i(\mathbf{1})-\sum_{i\in\mathcal{J}}\widetilde B_i(\mathbf{1})+\sum_{i\in\mathcal{K}}\widetilde B_i(\mathbf{1})\right.\\
&\left.+\mathsf{gcd}(\cdot)\left[\sum_{i\in\mathcal{I}\cup\mathcal{J}}K_i-\sum_{i\in\mathcal{K}}K_i\right]>Kd\right\}
\end{aligned}
\end{equation}
and
\begin{equation}\label{eq.5.14}
\begin{aligned}
&\mathsf{P}\left\{\sum_{i\in\mathcal{I}}a^{(i)}B_i(\mathbf{1})-\sum_{i\in\mathcal{J}}a^{(i)}\widetilde B_i(\mathbf{1})+\sum_{i\in\mathcal{K}}a^{(i)}\widetilde B_i(\mathbf{1})\right.\\
&\left.+\mathsf{gcd}(\cdot)\left[\sum_{i\in\mathcal{I}\cup\mathcal{J}}a^{(i)}K_i-\sum_{i\in\mathcal{K}}a^{(i)}K_i\right]>Kd\right\}.
\end{aligned}
\end{equation}

Now Lemma \ref{L1} can be applied. If $\sum_{i\in\mathcal{K}}K_i$ is an empty sum
with probability 1,
that is, for all $K_i$, $i=1,2,\ldots,d$, we have $\mathsf{P}\{\widetilde{B}_i\leq 2K_i\}=1$,
then the probabilities given by \eqref{eq.5.13} and \eqref{eq.5.14} are identical and both equal to half, since $I_i^+=1$ and $I_i^-=1$ have probability half.
So, the required result, conditioned by $(K_1, K_2,\ldots, K_d)$ given, trivially follows.  If $\sum_{i\in\mathcal{K}}K_i$ is not an empty sum, then
$$
Kd-\mathsf{gcd}(\cdot)\left[\sum_{i\in\mathcal{I}\cup\mathcal{J}}K_i-\sum_{i\in\mathcal{K}}K_i\right]
$$
and
$$
Kd-\mathsf{gcd}(\cdot)\left[\sum_{i\in\mathcal{I}\cup\mathcal{J}}a^{(i)}K_i-\sum_{i\in\mathcal{K}}a^{(i)}K_i\right]
$$
are large, and by applying Lemma \ref{L1} we arrive at the required inequality, finally, due to the total probability formula, which implies different and interchangeable values $K_i$, since $q_{l\mathsf{gcd}(\cdot)}^{(i)}$ are the same for all $i=1,2,\ldots,d$.

Let us now return to relations \eqref{eq.4.3}, \eqref{eq.4.4.1} and \eqref{eq.4.5} in simple case (Section \ref{S4.2}) and to relations \eqref{eq.4.6}, \eqref{eq.4.8} and \eqref{eq.4.10} in regular case (Section \ref{S4.3}). For large $K$, take $(k_1,k_2,\ldots,k_d)$=$(K_1,K_2,\ldots,K_d)$,
where $K_1$, $K_2$,\ldots,$K_d$ are large numbers satisfying $K_1+K_2+\ldots+K_d=Kd/\mathsf{gcd}(\cdot)$. Then, the denominators given by \eqref{eq.4.4.1} and by the right-hand side of \eqref{eq.4.8} are asymptotically identical, since the number of all possible values of $(K_1$, $K_2$,\ldots, $K_d)$ and $(a^{(1)}K_1$, $a^{(2)}K_2$,\ldots, $a^{(d)}K_d)$ asymptotically coincide (recall that $a^{(1)}+a^{(2)}+\ldots+a^{(d)}=d$).

To compare now the fractions given by the right-hand sides of \eqref{eq.4.3} and \eqref{eq.4.6} we use the following elementary fact. Let $R_1$ and $R_2$ be two fractions of the form
$$
R_1=\frac{b_1^{(1)}+b_2^{(1)}+\ldots+b_n^{(1)}}{c_1^{(1)}+c_2^{(1)}+\ldots+c_n^{(1)}}
$$
and
$$
R_2=\frac{b_1^{(2)}+b_2^{(2)}+\ldots+b_n^{(2)}}{c_1^{(2)}+c_2^{(2)}+\ldots+c_n^{(2)}}
$$
with positive $b_j^{(1)}$, $b_j^{(2)}$, $c_j^{(1)}$ and $c_j^{(2)}$, $j=1,2,\ldots,n$.
If the inequality $b_j^{(1)}/c_j^{(1)}\leq b_j^{(2)}/c_j^{(2)}$ is true for all $j=1,2,\ldots,n$, then $R_1\leq R_2$.

Applying this, we arrive at the conclusion that the right-hand side of \eqref{eq.4.3} is not greater than the right-hand side of \eqref{eq.4.6}, and, hence, the inequality between \eqref{eq.5.9} and \eqref{eq.5.10} is equivalent to the inequality
\begin{equation*}
\begin{aligned}
&\lim_{t\to\infty}\mathsf{P}\{\|\mathbf{S}_t(\mathbf{1})\|>z~|~\|\mathbf{S}_{t-1}(\mathbf{1})\|=z\}\\
&\leq\lim_{t\to\infty}\mathsf{P}\{\|\mathbf{S}_t(a)\|>z~|~\|\mathbf{S}_{t-1}(a)\|=z\},\\
& \quad a\in\mathcal{A}
\end{aligned}
\end{equation*}
for all $z\geq z^*$, where $z^*$ is some (possibly large) value.
This proves the theorem in the case when $\mathbf{x}_t$ is an integer-valued vector.

\subsection{Extension of the proof of Theorem \ref{T1} in the case when $\mathbf{x}_t$ is a real-valued random vector}\label{S4.5}

We consider the series of random variables $B^{(i)}(a,m)$ and $\widetilde B^{(i)}(a,m)$, where an added parameter $m$ is the series parameter, $i=1,2,\ldots,d$ and $a\in\mathcal{A}$. As before, the random variables $B^{(i)}(\mathbf{1},m)$ and $\widetilde B^{(i)}(\mathbf{1},m)$ are of special significance. Assume that these random variables take values from the set of rational numbers with span $\alpha_m$, denoted as $\mathsf{span}_m(\cdot)$. Then, similarly to the consideration above, the random variables $B^{(i)}(a,m)$ and $\widetilde B^{(i)}(a,m)$ have span $a^{(i)}\mathsf{span}_m(\cdot)$. Apparently, that all the arguments provided for integer-valued random variables are applicable here as well, and the required proofs are true with the replacement of $\mathsf{gcd}(\cdot)$ by $\mathsf{span}_m(\cdot)$ for each of the $m$th series indexed with $m$. Then the statement of the theorem in the continuous case follows by taking the appropriate limit as $\mathsf{span}_m(\cdot)\to0$ with the series parameter $m$ increasing to infinity.

\section{Concluding remarks}\label{S5}
In the present paper, we studied semiconservative random walks in week sense. Originally we attempted to construct semiconservative random walks given by Definition \ref{D1}, but we found the problem hard. So, the problem to describe new nontrivial classes of conservative and semiconservative random walks in $\mathbb{Z}^d$ or $\mathbb{R}^d$ is open.

\end{document}